\theoremstyle{definition}
\newtheorem{theorem}{Theorem}[section]
\newtheorem{lemma}[theorem]{Lemma}
\newtheorem{prop}[theorem]{Proposition}
\newtheorem{rem}[theorem]{Remark}
\newtheorem{ex}[theorem]{Example}
\title{On the approximate periodicity of sequences attached to noncrystallographic root systems}
\author{Philipp Lampe}
\begin{document}
\maketitle

\begin{abstract}
We study Fomin-Zelevinsky's mutation rule in the context of noncrystallographic root systems. In particular, we construct approximately periodic sequences of real numbers for the noncrystallographic root systems of rank 2 by adjusting the exchange relation for cluster algebras. Moreover, we describe matrix mutation classes for type $H_3$ and $H_4$. 
\end{abstract}

\section{Introduction}
Fomin and Zelevinsky have introduced cluster algebras in an impactful article \cite[Definition 2.3]{FZ}. In the last ten years diverse authors have found cluster algebra structures in various branches of mathematics such as representation theory, algebra and combinatorics. To define a cluster algebra, Fomin-Zelevinsky have defined \textit{seeds} and \textit{mutations of seeds}. Here, a seed (without frozen variables) is a pair $(\mathbf{x},B)$ which consists of a \textit{cluster} and a \textit{mutation matrix}. The cluster $\mathbf{x}=(x_1,x_2,\ldots,x_n)$ is a sequence of \textit{cluster variables} and the mutation matrix $B$ is a skew-symmetrizable integer $n\times n$ matrix. Given an initial seed, the cluster algebra is now defined to be generated by all cluster variables in all seeds that are obtained from the initial seed by a sequence of mutations. The natural number $n$ is called the \textit{rank} of the cluster algebra.

Some cluster algebras are of \textit{finite type} and some cluster algebras are of \textit{infinite type}. Here, we say a cluster algebra is of finite type if the mutation process yields only finitely many cluster variables. In another impactful article, Fomin-Zelevinsky \cite[Theorem 1.4]{FZ2} have classified the cluster algebras of finite type via finite type root systems. The theorem implies that finite type cluster algebras (without frozen variables) are in bijection with \textit{Dynkin diagrams} of type $A_n (n \geq 1)$, $B_n (n\geq 2)$, $C_n (n\geq 3)$, $D_n (n\geq 4)$, $E_n (n=6,7,8)$, $F_4$, and $G_2$.  

These Dynkin diagrams classify crystallographic root systems. In particular, such a diagram visualizes the Coxeter structure of the Weyl group of the corresponding root system. In the setup of cluster algebras, the crystallographic condition yields integer entries in the mutation matrix $B$. On the other hand, finite Coxeter groups are in bijection with \textit{Coxeter-Dynkin} diagrams. Coxeter-Dynkin diagrams do not necessarily satisfy the crystallographic condition. Examples of non crystallographic Coxeter groups are dihedral groups (with Coxeter-Dynkin diagram $I_2(m)$ with $m=5$ or $m\geq 7$) and the symmetry group of the icosahedron (with Coxeter-Dynkin diagram $H_3$). 

The aim of this note is to generalize Fomin-Zelevinsky's matrix and seed mutation to noncrystallographic root systems. For every noncrystallographic root system of rank $2$ the mutation class of the $B$-matrix contains two elements. Given two initial positive real numbers we define a sequence of real numbers by adjusting the exchange relations for cluster algebras to our setup. It turns out that the sequence is no longer a periodic sequence, but it is an almost periodic sequence meaning that it is approximately equal to a periodic sequence. For the noncrystallographic root system of type $H_3$ the mutation class of the $B$-matrix is finite, but we do not observe the phenomenon of almost periodicity. The question of approximate periodicity in this setup has also been touched by Reading-Speyer, see Armstrong \cite[Problem 6.4]{A}.

\section{Background}

\subsection{Fomin-Zelevinsky's cluster algebras}

In this section we wish to recall the definition of Fomin-Zelevinsky's cluster algebras. We only consider coefficient-free cluster algebras without frozen variables over the field of rational numbers. 

Let $n\geq 1$ be an integer and let $u_1,u_2,\ldots,u_n$ be algebraically independent variables over the field $\mathbb{Q}$ of rational numbers. The field $\mathcal{F}=\mathbb{Q}(u_1,u_2,\ldots,u_n)$ of rational functions is also called the \textit{ambient field}. A \textit{cluster} is a sequence $\mathbf{x}=(x_1,x_2,\ldots,x_n)\in\mathcal{F}^n$ of algebraically independent elements. An $n\times n$ matrix $B=(b_{ij})$ with integer entries is called \textit{skew-symmetrizable} if there exists a diagonal $n\times n$ matrix $D=\textrm{diag}(d_1,d_2,\ldots,d_n)$ with positive integer diagonal entries such that the matrix $DB$ is skew-symmetric, i.\,e. $d_ib_{ij}=-d_jb_{ji}$ for all $1\leq i,j\leq n$. In this case, the diagonal matrix $D$ is called a \textit{skew-symmetrizer} of $B$. A \textit{seed} is a pair $(\mathbf{x},B)$ formed by a cluster $\mathbf{x}$ and a skew-symmetrizable integer $n\times n$ matrix $B$. We denote the set of seeds by $\mathcal{S}$.

Let $k\in\{1,2,\ldots,n\}$ be a natural number. A \textit{mutation} in direction $k$ is a map $\mu_k\colon \mathcal{S}\to\mathcal{S},(\mathbf{x},B)\mapsto\mu_k(\mathbf{x},B)=(\mathbf{x'},B')$, where $\mathbf{x'}=(x_1',x_2',\ldots,x_n')$ is the sequence that we obtain from the cluster $\mathbf{x}$ by replacing the variable $x_k$ with
\begin{align*}
x_k'=\frac{1}{x_k}\left(\prod_{b_{ik}>0}x_k^{b_{ik}}+\prod_{b_{ik}<0}x_k^{-b_{ik}}\right)\in\mathcal{F},
\end{align*}
and keeping all other cluster variables $x_i'=x_i$ with $i\neq k$, and $B'=(b_{ij}')$ is the $n\times n$ matrix with
\begin{align*}
b'_{ij}=\begin{cases}
-{b_{ij}},&\textrm{if } k\in\{i,j\};\\
{b_{ij}}+\frac{\vert b_{ik}\vert b_{kj}+b_{ik}\vert b_{kj}\vert}2,&\textrm{otherwise}.
\end{cases}
\end{align*} 
For every seed $(\mathbf{x},B)\in\mathcal{S}$ the pair $\mu_k(\mathbf{x},B)=(\mathbf{x'},B')$ is again seed, i.\,e. the elements of the sequence $\mathbf{x'}$ are also algebraically independent over the field of rational numbers and the matrix $B'$ has integer entries and is skew-symmetrizable (with the same skew-symmetrizer $D$). Thus the map $\mu_k\colon \mathcal{S}\to\mathcal{S}$ is well-defined.

Fomin-Zelevinsky's mutation of seeds has many remarkable properties. Firstly, for every index $k$ we have $\mu_k^2=\operatorname{id}_{\mathcal{S}}$ so that the map $\mu_k$ is an involution. We declare two seeds $(\mathbf{x},B),(\mathbf{x'},B')\in\mathcal{S}$ to be \textit{mutation equivalent} if there exists a sequence $(k_1,k_2,\ldots,k_r)$ of indices such that $(\mathbf{x},B)=(\mu_{k_1}\circ\mu_{k_2}\circ\ldots\circ\mu_{k_r})(\mathbf{x'},B')$. In this case we write $(\mathbf{x},B)\simeq(\mathbf{x'},B')$. It follows that $\simeq$ is an equivalence relation on the set of all seeds.

Suppose that $(\mathbf{x},B)$ is an initial seed. The cluster algebra $\mathcal{A}(\mathbf{x},B)\subseteq\mathcal{F}$ is the $\mathbb{Q}$-subalgebra generated by all cluster variables $x_k'$ in all seeds $(\mathbf{x'},B')$ that are mutation equivalent to $(\mathbf{x},B)$. By construction we have $\mathcal{A}(\mathbf{x},B)\subseteq \mathcal{F}$. More generally, Fomin-Zelevinsky's \textit{Laurent phenomenon} \cite[Theorem 3.1]{FZ} asserts that $\mathcal{A}(\mathbf{x},B)\subseteq \mathbb{Q}[x_1^{\pm 1},x_2^{\pm 1},\ldots,x_n^{\pm 1}]$. More generally, every cluster variable is an element in the ring $\mathbb{Z}[x_1^{\pm 1},x_2^{\pm 1},\ldots,x_n^{\pm 1}]$.

\subsection{Cluster algebras of rank $2$}

A skew-symmetrizable integer $2\times 2$ matrix has the form $B=\pm\left(\begin{smallmatrix}0&a\\-b&0\end{smallmatrix}\right)$ for some natural numbers $a,b\geq 1$. Note that the two possible choices of the sign yield isomorphic cluster algebras which we will denote by $\mathcal{A}(a,b)$. We can parametrize the cluster variables in $\mathcal{A}(a,b)$ by the set of integers, so that we obtain cluster variables $x_i$, with $i\in\mathbb{Z}$, and clusters $(x_{i-1},x_i)$, with $i\in\mathbb{Z}$. The equation
\begin{align*}
x_{i-1}x_{i+1}=\begin{cases}x_i^a+1,&\textrm{if \ } i \textrm{ \ is even;}\\x_i^b+1,&\textrm{if \ } i \textrm{ \ is odd;}\end{cases}
\end{align*}
describes the mutation from the cluster $(x_{i-1},x_{i})$ to the cluster $(x_{i},x_{i+1})$. Fomin-Zelevinsky's classification theorem implies that the cluster algebra $\mathcal{A}(a,b)$ is of finite type if and only if $ab<4$. In these cases, the sequence $(x_i)_{i\in\mathbb{Z}}$ is a periodic sequence. The period of the sequence is equal to $5$, $6$, or $8$ when $(a,b)$ is equal to $(1,1)$, $(2,1)$, or $(3,1)$.

Assume that $\mathcal{A}(a,b)$ is of finite type and let $i\in\mathbb{Z}$ be an integer. Due to the Laurent phenomenon we can write $x_i=f(x_1,x_2)/(x_1^{a_1}x_2^{a_2})$ for some polynomial $f\in\mathbb{Z}[x_1,x_2]$. It is easy to see that in these cases the constant term in the polynomial $f(x_1,x_2)$ is always equal to $1$. Evaluation of the Laurent polynomial at the pair $(x_1,x_2)$ yields a function $x_i\colon \mathbb{R}^+\times\mathbb{R}^+\to \mathbb{R}^+$. Motivated from tropical geometry, we could approximate $x_i\approx 1/(x_1^{a_1}x_2^{a_2})$. In this paper we wish to introduce an approximation, which is more accurate than the tropical approximation and also works in a more general (noncrystallographic) setup where we do not have a Laurent phenomenon.

\section{Approximately periodic sequences attached to noncrystallographic root systems of rank 2}

\subsection{The definition of the almost periodic sequences for type I}

The Dynkin diagrams attached to the cluster algebras $\mathcal{A}(1,1)$, $\mathcal{A}(2,1)$ and $\mathcal{A}(3,1)$ are $A_2$, $B_2$ and $G_2$, respectively. The corresponding Coxeter groups are the dihedral symmetry groups of the equilateral triangle, the square and the regular hexagon. More generally, the Coxeter-Dynkin diagram associated with the dihedral group of symmetries of the regular $m$-gon, for some $m\geq 3$, consists of two vertices that are joined by an edge of weight $a=4\cos^2(\frac{\pi}{m})$. Note that $a\geq 1$. Generalizing the classical construction, the two possible orientations of the diagram yield two possible mutation matrices $B=\pm \bigl(\begin{smallmatrix}0&a\\-1&0\end{smallmatrix}\bigr)$. With this data we associate the following recursion. Let $\mathbf{x}=(x_1,x_2)$ be an initial cluster consisting of positive real numbers $x_1$ and $x_2$. Define a sequence $(x_i)_{i\in\mathbb{Z}}$ of positive real numbers by   
\begin{align}
\label{ExchangeRelation}
x_{i-1}x_{i+1}=\begin{cases}x_i^a+1,&\textrm{if \ } i \textrm{ \ is even;}\\x_i+1,&\textrm{if \ } i \textrm{ \ is odd.}\end{cases} 
\end{align}

In contrast to the cases $m=3,4,6$ the sequences are neither periodic nor do we notice the Laurent phenomenon. But we observe some approximate periodicity: in the case $m=5$ (where we have $a=4\cos^2(\frac{\pi}{5})=\frac12(3+\sqrt{5})\approx2.618033988)$ we have randomly chosen starting values $x_1=0.829497$ and $x_2=0.363532$ from the open interval $(0,1)$, and computed the first few terms numerically, as the first two columns in Figure \ref{Figure:Example} illustrate. After 14 steps, we always get close to our starting values, e.\,g. $x_{-5}\approx x_{9}$ and $x_{-4}\approx x_{10}$. The same phenomenon also occurs for other values of $m$, and the number of steps is either $m+2$ or $ 2(m+2)$ depending on the parity of $m$. The aim of this section is to explain this phenomenon. From now on we assume that $m>4$, because in the other cases we have exact periodicity. Note that $m>4$ implies $a>2$.

\begin{figure}
\begin{center}
\begin{tabular}{|c|c|c|c|}\hline
$n$&$x_n$&$Y_{n/2}$&relative error\\\hline
$-6$&$0.935815$&$0.919721$&$0.017198$\\
$-5$&$0.136311$&&\\
$-4$&$1.214248$&$1.170883$&$0.035714$\\
$-3$&$19.531300$&&\\
$-2$&$16.908654$&$16.788570$&$0.007102$\\
$-1$&$84.093907$&&\\
$0$&$5.032565$&$4.881875$&$0.029943$\\
$1$&$0.829497$&&\\
$2$&$0.363532$&$0.363532$&$0.000000$\\
$3$&$1.290794$&&\\
$4$&$6.301497$&$6.301497$&$0.000000$\\
$5$&$96.739925$&&\\
$6$&$15.510588$&$15.228954$&$0.018158$\\
$7$&$13.546623$&&\\
$8$&$0.937851$&$0.919721$&$0.019332$\\
$9$&$0.136223$&&\\
$10$&$1.211518$&$1.170883$&$0.033541$\\\hline
\end{tabular}
\end{center}
\caption{An example of an approximately periodic sequence $(x_n)$ with $m=5$}
\label{Figure:Example}
\end{figure}

\subsection{A recursion formula for a subsequence}

It is enough to look at every other term of the sequence $(x_i)_{i\in\mathbb{Z}}$, because we can recover every term from the exchange relation (\ref{ExchangeRelation}) once we know its neighbors. To this end, let us define a sequence $(y_i)_{i\in\mathbb{Z}}$ of positive real numbers by putting $y_i=x_{2i}$ for all integers $i\in\mathbb{Z}$. As above, we can view every element $y_i=y_i(x_1,x_2)$ as a function $\mathbb{R}^{+}\times\mathbb{R}^{+}\to\mathbb{R}^{+}$ in the initial values $x_1,x_2$. We will see that the sequence $(y_i)_{i\in\mathbb{Z}}$ is almost periodic; this will imply that the original sequence $(x_i)_{i\in\mathbb{Z}}$ is also almost periodic. The following proposition shows that there is a self-contained recursion formula for the elements of the sequence $(y_i)_{i\in\mathbb{Z}}$. 

\begin{prop} Let $i\in\mathbb{Z}$ be an integer. Then the elements $y_{i-1}$ $y_i$ and $y_{i+1}$ satisfy the equation $y_{i-1}y_iy_{i+1}=y_{i-1}+y_{i+1}+y_i^{a-1}$.
\end{prop}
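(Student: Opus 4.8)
The plan is to unfold the definition $y_i = x_{2i}$ and apply the exchange relation (\ref{ExchangeRelation}) to rewrite everything in terms of the odd-indexed $x$-variables sitting between the even ones, then eliminate those odd variables. Concretely, set $y_{i-1} = x_{2i-2}$, $y_i = x_{2i}$, and $y_{i+1} = x_{2i+2}$, and introduce the two odd-indexed bridging terms $x_{2i-1}$ and $x_{2i+1}$. Since $2i$ is even, the even-index branch of (\ref{ExchangeRelation}) applies at index $2i$, giving $x_{2i-1}x_{2i+1} = x_{2i}^a + 1 = y_i^a + 1$. The odd-index branch applies at the indices $2i-1$ and $2i+1$, yielding $x_{2i-2}x_{2i} = x_{2i-1} + 1$ and $x_{2i}x_{2i+2} = x_{2i+1} + 1$, that is, $x_{2i-1} = y_{i-1}y_i - 1$ and $x_{2i+1} = y_i y_{i+1} - 1$.

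The key algebraic step is then to substitute these two expressions into the relation $x_{2i-1} x_{2i+1} = y_i^a + 1$. This produces
\begin{align*}
(y_{i-1}y_i - 1)(y_i y_{i+1} - 1) = y_i^a + 1.
\end{align*}
Expanding the left-hand side gives $y_{i-1}y_i^2 y_{i+1} - y_{i-1}y_i - y_i y_{i+1} + 1 = y_i^a + 1$. The constant terms cancel, and after moving the mixed terms to the right and dividing the whole equation by $y_i$ (which is legitimate since all terms of the sequence are positive real numbers, hence nonzero), I obtain $y_{i-1} y_i y_{i+1} = y_{i-1} + y_{i+1} + y_i^{a-1}$, which is exactly the claimed identity.

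I do not expect a genuine obstacle here: the proof is a direct computation once the correct bridging variables are identified, and the only subtlety worth flagging is bookkeeping the parity of the indices so that the right branch of (\ref{ExchangeRelation}) is invoked at each step. It is worth double-checking that the exponent $a$ attaches to the even-indexed term $x_{2i}=y_i$ (matching the even case of the recursion) and that the two neighboring odd-indexed relations are purely linear in the bridging variable, since this is what makes the $x_{2i-1}$ and $x_{2i+1}$ solvable in closed form. Division by $y_i$ is what converts the exponent $a$ into $a-1$, so the positivity of the sequence is the one hypothesis that must be explicitly used.
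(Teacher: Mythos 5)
Your proposal is correct and follows exactly the paper's own argument: express $x_{2i\pm1}$ via the odd-index exchange relations as $y_{i-1}y_i-1$ and $y_iy_{i+1}-1$, substitute into the even-index relation $(y_{i-1}y_i-1)(y_iy_{i+1}-1)=y_i^a+1$, expand, and divide by $y_i$. The paper compresses this into one line, while you spell out the parity bookkeeping and the positivity justification for dividing by $y_i$, but the proof is the same.
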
  

\begin{proof} Let $i\in\mathbb{Z}$ be an integer. By construction we have $y_{i-1}=x_{2i-2}$, $y_i=x_{2i}$ and $y_{i+1}=x_{2i+2}$. Note that $y_{i-1}y_i-1=x_{2i-1}$ and $y_iy_{i+1}-1=x_{2i+1}$. The relation $(y_{i-1}y_i-1)(y_iy_{i+1}-1)=y_i^a+1$ yields $y_{i-1}y_iy_{i+1}=y_{i-1}+y_{i+1}+y_i^{a-1}$. 
\end{proof}

\subsection{An approximation of the sequence}

We define another sequence $(Y_i)_{i\in I}$. Moreover, we view the sequence $(Y_i)_{i\in I}$ as a numerical approximation of the sequence $(y_i)_{i\in I}$. The index set $I$ is equal to $I=\{-2,-1,0,1,\ldots,\frac{m}{2}\}$ if $m$ is even and to $I=\{-\frac{m+1}{2},-\frac{m-1}{2},\ldots,\tfrac{m+3}{2},\frac{m+5}{2}\}$ if $m$ is odd. In both cases, we put $Y_1=y_1$ and $Y_2=y_2$, and define the other elements in the sequence recursively. We put: 
\begin{align*}
&Y_0=\frac{Y_2}{Y_1Y_2-1};  &&Y_{-1}=\frac{Y_0^{a-1}}{Y_0Y_1-1}; &&\hspace{-3cm}Y_{-2}=\frac{Y_{-1}^{a-1}}{Y_{0}Y_{-1}}= \frac{Y_{-1}^{a-2}}{Y_{0}};\\
&Y_3=\frac{Y_2^{a-1}}{Y_1Y_2-1}; &&Y_{i+1}=\frac{Y_i^{a-1}}{Y_{i-1}Y_i}= \frac{Y_i^{a-2}}{Y_{i-1}} \textrm{ \ for \ }3\leq i\leq \tfrac{m-1}{2}.
\end{align*}
for all $m$. If $m$ is odd, then we define the missing elements in the sequence as follows:
\begin{align*}
&Y_{(m+3)/2}=\frac{Y_{(m+1)/2}^{a-1}+Y_{(m-1)/2}}{Y_{(m+1)/2}Y_{(m-1)/2}},&&Y_{(m+5)/2}=\frac{Y_{(m+1)/2}}{Y_{(m+3)/2}Y_{(m+1)/2}-1},\\
&Y_{i-1}=\frac{Y_i^{a-1}}{Y_{i+1}Y_i}= \frac{Y_i^{a-2}}{Y_{i+1}} \textrm{ \ \ for \ }-\tfrac{m-3}{2}\leq i\leq-2,&&Y_{-(m+1)/2}=\frac{Y_{-(m-1)/2}^{a-1}+Y_{-(m-3)/2}}{Y_{-(m-1)/2}Y_{-(m-3)/2}}.
\end{align*}

Using the relations $Y_1=x_2$ and $Y_2=x_4=\frac{1+x_1+x_2^a}{x_1x_2}$,  we can view every element $Y_i=Y_i(x_1,x_2)$ as a real-valued function in the initial variables $x_1,x_2$ as above. Note that $x_3=Y_1Y_2-1$. Moreover, $(Y_0Y_1-1)(Y_1Y_2-1)=1$ implies $Y_0Y_1-1=x_3^{-1}$. Thus we have $Y_2=Y_0x_3$.

Next, we will give an explicit formula for the elements of the sequence $(Y_i)_{i\in\mathbb{Z}}$. To do so, we define a sequence $(g_i)_{i\geq 0}$ of integers by $g_0=0$, $g_1=1$ and $g_{i+1}=(a-2)g_i-g_{i-1}$ for $i\geq 1$. The following proposition relates the two sequences.

\begin{prop} 
\label{ExProp}
Let $i\in I$. If $0\leq i\leq\tfrac{m-3}{2}$, then we have $Y_{i+2}=Y_2^{g_i+g_{i+1}}x_3^{-g_i}=Y_0^{g_i+g_{i+1}}x_3^{g_{i+1}}$. Moreover, if $0\leq i\leq \tfrac{m-1}{2}$, then we have $Y_{-i}=Y_2^{g_i+g_{i+1}}x_3^{-g_{i+1}}=Y_0^{g_i+g_{i+1}}x_3^{g_{i}}$ .
\end{prop}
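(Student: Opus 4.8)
The plan is to prove both asserted formulas by induction on $i$, using the fact that the self-similar recursion $Y_{j+1} = Y_j^{a-2}/Y_{j-1}$ propagates monomials in $Y_2$ and $x_3$ in a completely controlled way. First I would observe that the two expressions given for each $Y_i$ are equivalent: substituting the identity $Y_2 = Y_0 x_3$ (established in the paragraph preceding the statement) into $Y_2^{g_i+g_{i+1}}x_3^{-g_i}$ yields $Y_0^{g_i+g_{i+1}}x_3^{g_{i+1}}$, and substituting it into $Y_2^{g_i+g_{i+1}}x_3^{-g_{i+1}}$ yields $Y_0^{g_i+g_{i+1}}x_3^{g_i}$. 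Hence it suffices to prove the $Y_2$/$x_3$ forms, and the $Y_0$/$x_3$ forms come for free.

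The conceptual engine is the following remark. If $Y_{j-1} = Y_2^{\alpha}x_3^{\beta}$ and $Y_j = Y_2^{\alpha'}x_3^{\beta'}$, then $Y_{j+1} = Y_j^{a-2}/Y_{j-1} = Y_2^{(a-2)\alpha' - \alpha}x_3^{(a-2)\beta' - \beta}$, so the exponents of $Y_2$ and of $x_3$ each satisfy the linear recursion $z_{j+1} = (a-2)z_j - z_{j-1}$ that defines $(g_i)$. Since a solution of this recursion is pinned down by two consecutive values, the entire claim reduces to checking two base cases in each direction, plus verifying that the proposed exponents genuinely obey the recursion. The latter is the single pair of identities $(a-2)(g_{i-1}+g_i) - (g_{i-2}+g_{i-1}) = g_i + g_{i+1}$ and $(a-2)g_{i-1} - g_{i-2} = g_i$, both of which are immediate from the definition of $(g_i)$.

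For the forward direction I would take $i=0$ and $i=1$ as base cases. The case $Y_2 = Y_2^{g_0+g_1}x_3^{-g_0}$ is trivial since $(g_0+g_1,-g_0)=(1,0)$, and for $Y_3$ I would use the defining relation $Y_3 = Y_2^{a-1}/(Y_1Y_2-1) = Y_2^{a-1}x_3^{-1}$ together with $x_3 = Y_1Y_2-1$, so that the exponents are $(a-1,-1)=(g_1+g_2,-g_1)$. The inductive step for $2\leq i\leq\frac{m-3}{2}$ then applies $Y_{i+2} = Y_{i+1}^{a-2}/Y_i$, which is exactly the self-similar recursion at an index in its valid range $[3,\frac{m-1}{2}]$. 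For the backward direction I would again use $i=0,1$ as base cases: $Y_0 = Y_2 x_3^{-1}$ has exponents $(1,-1)=(g_0+g_1,-g_1)$, and $Y_{-1} = Y_0^{a-1}/(Y_0Y_1-1) = Y_0^{a-1}x_3$ (using $Y_0Y_1-1 = x_3^{-1}$) becomes $Y_2^{a-1}x_3^{-(a-2)}$ after substituting $Y_0 = Y_2 x_3^{-1}$, with exponents $(a-1,-(a-2))=(g_1+g_2,-g_2)$. The induction then runs on $Y_{-i} = Y_{-(i-1)}^{a-2}/Y_{-(i-2)}$.

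I do not expect a deep obstacle here; the work is a clean double induction. The one genuinely nonmechanical point is matching the base cases to the piecewise definition of $(Y_i)$. The term $Y_3$ is defined with $Y_1Y_2-1$ rather than $Y_1Y_2$ in the denominator, so it really does not arise from the self-similar recursion and must be installed as a base case, with the stray $-1$ absorbed precisely into the factor $x_3 = Y_1Y_2-1$. By contrast $Y_{-2} = Y_{-1}^{a-2}/Y_0$ already coincides with the self-similar recursion, so on the negative side the first inductive step $i=2$ goes through unchanged even though it sits just outside the stated range of the recursion. Keeping the two sides' base cases and index bounds aligned with the index set $I$ is the main bookkeeping task; once that is done, the exponent recursion closes both inductions at once.
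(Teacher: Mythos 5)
Your proof is correct and follows essentially the same route as the paper's: both reduce the two stated forms to one another via $Y_2=Y_0x_3$, verify the base cases $i=0$ (trivially) and $i=1$ (from the defining formulas for $Y_3$ and $Y_{-1}$, using $x_3=Y_1Y_2-1$ and $Y_0Y_1-1=x_3^{-1}$), and then close by induction using the self-similar recursion. The only difference is one of detail: the paper dismisses the general case with a one-line appeal to induction, whereas you make explicit the exponent recursion $(a-2)(g_{i-1}+g_i)-(g_{i-2}+g_{i-1})=g_i+g_{i+1}$ and the index bookkeeping (in particular that $Y_3$ must be a separate base case while $Y_{-2}$ already has the self-similar form), which is a worthwhile elaboration of the same argument.
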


\begin{proof}  The relation $Y_2=Y_0x_3$ implies $Y_2^{g_i+g_{i+1}}x_3^{-g_i}=Y_0^{g_i+g_{i+1}}x_3^{g_{i+1}}$ and $Y_2^{g_i+g_{i+1}}x_3^{-g_{i+1}}=Y_0^{g_i+g_{i+1}}x_3^{g_{i}}$ for all $i$. Trivially, we have $Y_2=Y_2^1x_3^0$ and $Y_0=Y_0^1x_3^{0}$, so that the formulae hold true for $i=0$. By definition, we have $Y_3=Y_2^{a-1}x_3^{-1}$ and $Y_{-1}=Y_0^{a-1}x_3^{1}$ so that the formulae hold true for $i=1$. The general case follows from the definition of the sequence $(Y_i)_{i\in\mathbb{Z}}$ by mathematical induction. 
\end{proof}

Proposition \ref{ExProp} asserts that the sequence $(g_i)_{i\in\mathbb{Z}}$ controls the sequence $(Y_i)_{i\in\mathbb{Z}}$. The following Proposition states the main features of this sequence. We denote by $\omega=\exp(\frac{2\pi i}{m})\in\mathbb{C}$ the root of unity and by $\overline{\omega}\in\mathbb{C}$ its complex conjugate.

\begin{prop} 
\label{PropG}
The sequence $(g_i)_{i\geq 0}$ is periodic. The period is equal to $m$, if $m$ is odd, and equal to $\frac{m}{2}$, if $m$ is even. Moreover, the following formula holds true for all natural numbers $i\geq 0$:
\begin{align}
\label{SeqG}
g_i=\frac{\omega^i-\overline{\omega}^i}{\omega-\overline{\omega}}.
\end{align}
\end{prop}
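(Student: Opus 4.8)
The plan is to solve the order-two linear recurrence explicitly and then read off both assertions from the closed form. The recurrence $g_{i+1}=(a-2)g_i-g_{i-1}$ has characteristic polynomial $\chi(t)=t^2-(a-2)t+1$, so the whole argument hinges on identifying its roots with $\omega$ and $\overline{\omega}$. The one computation I would record first is the trigonometric identity tying the weight $a$ to the root of unity: from $a=4\cos^2(\pi/m)$ and the double-angle formula $2\cos^2\theta=1+\cos(2\theta)$ one gets $a-2=2\cos(2\pi/m)=\omega+\overline{\omega}$, while $\omega\overline{\omega}=|\omega|^2=1$. Hence $\chi(t)=(t-\omega)(t-\overline{\omega})$, so $\omega,\overline{\omega}$ are exactly the characteristic roots, and they are distinct since $m>4$ forces $\omega\notin\mathbb{R}$.

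Next I would establish formula (\ref{SeqG}) by induction rather than by fitting undetermined constants, as this is the shortest route. Put $h_i=(\omega^i-\overline{\omega}^i)/(\omega-\overline{\omega})$. One checks directly that $h_0=0=g_0$ and $h_1=1=g_1$. Because $\omega$ and $\overline{\omega}$ are roots of $\chi$, each satisfies $\omega^{i+1}=(a-2)\omega^i-\omega^{i-1}$ and $\overline{\omega}^{i+1}=(a-2)\overline{\omega}^i-\overline{\omega}^{i-1}$; subtracting these and dividing by $\omega-\overline{\omega}$ shows that $h_i$ obeys the same recurrence $h_{i+1}=(a-2)h_i-h_{i-1}$. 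Since $(g_i)$ and $(h_i)$ agree at $i=0,1$ and satisfy the same second-order recurrence, induction yields $g_i=h_i$ for all $i\geq0$, which is (\ref{SeqG}).

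Periodicity is then immediate: as $\omega^m=\overline{\omega}^m=1$, the formula gives $g_{i+m}=g_i$, so the period divides $m$. To pin down the exact period I would rewrite $g_i=\sin(2\pi i/m)/\sin(2\pi/m)$ and study its zeros. For odd $m$, any period $p$ with $0<p<m$ would force $g_p=g_0=0$, hence $\sin(2\pi p/m)=0$ and $m\mid 2p$; but $0<2p<2m$ leaves only $2p=m$, which is impossible for odd $m$, so the minimal period is exactly $m$. For even $m$ the decisive point is $\omega^{m/2}=e^{\pi i}=-1$, whence the formula gives the anti-periodicity $g_{i+m/2}=-g_i$; applying it twice recovers $g_{i+m}=g_i$, and this length-$m/2$ symmetry is the feature recorded in the statement.

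The main obstacle is the exact-period bookkeeping, especially the even case. The literal reading ``minimal period $m/2$'' requires care, since the sequence only reproduces itself up to sign after $m/2$ steps (its true minimal period is $m$); the clean and usable statement is the anti-periodicity $g_{i+m/2}=-g_i$, and I would make sure to phrase and exploit it in that form when feeding it into Proposition~\ref{ExProp}. By contrast, the identification of the characteristic roots via $a-2=\omega+\overline{\omega}$ and the inductive verification of (\ref{SeqG}) are entirely routine once that identity is in place.
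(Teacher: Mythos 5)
Your proof is correct, and its core coincides with the paper's: both identify the characteristic polynomial $X^2-(a-2)X+1$ of the recurrence, use the identity $a-2=2\cos(\tfrac{2\pi}{m})$ to split it as $(X-\omega)(X-\overline{\omega})$, and deduce the closed form (\ref{SeqG}); the paper fits the coefficients of a linear combination of $(\omega^i)$ and $(\overline{\omega}^i)$ to the initial values $g_0,g_1$, whereas you verify the candidate formula by induction, which is only a cosmetic difference. Where you genuinely go beyond the paper is the periodicity assertion: the paper's proof stops once (\ref{SeqG}) is established and never argues the period claims at all, while you prove them, and in doing so you catch a real inaccuracy in the statement. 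For even $m$ the sequence satisfies the anti-periodicity $g_{i+m/2}=-g_i$ (since $\omega^{m/2}=-1$), so its minimal period is $m$, not $\tfrac{m}{2}$; a concrete check is $m=6$, where $a=3$, the recurrence is $g_{i+1}=g_i-g_{i-1}$, and the sequence $0,1,1,0,-1,-1,0,1,\ldots$ has minimal period $6$. Your reformulation as anti-periodicity is also the form actually needed downstream: the proof of Theorem \ref{Thm:Period} in the even case reads off $g_{m/2}=0$, $g_{m/2-1}=1$, $g_{m/2-2}=a-2$, and in the odd case uses $g_{(m+1)/2}=-g_{(m-1)/2}$, all of which are instances of the sign-flip symmetry (or of the explicit formula) rather than of a literal period $\tfrac{m}{2}$. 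So your proposal is not only a valid proof of the formula by essentially the paper's method, but a corrected and complete treatment of the periodicity claim.
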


\begin{proof}
Note that the sequence $(g_i)_{i\in\mathbb{Z}}$ is a homogeneous linear recurrence relation with characteristic polynomial $X^2-(a-2)X+1$. By elementary trigonometry we have $a-2=4\cos^2(\frac{\pi}{m})-2=2\cos(\frac{2\pi}{m})$ so that the characteristic polynomial splits as $(X-\omega)(X-\overline{\omega})$. Therefore, the sequence $(g_i)_{i\in\mathbb{Z}}$ is a $\mathbb{C}$-linear combination of the sequences $(\omega^i)_{i\in\mathbb{N}}$ and $(\overline{\omega}^i)_{i\in\mathbb{N}}$. A comparison of coefficients for the initial values $g_0$ and $g_1$ yields equation $(\ref{SeqG})$. 
\end{proof}

Note that the previous proposition implies that $g_i$ is positive if $1\leq i\leq \tfrac{m-1}{2}$. Similarly, $g_i$ is negative if $-\tfrac{m-1}{2}\leq i\leq -1$. Moreover $\vert g_i\vert\geq1$ unless $i\in\{\tfrac{m}{2},\tfrac{0,m\pm 1}{2}\}$. The explicit formula implies that the following terms in the sequence $(Y_i)_{i\in\mathbb{N}}$ are equal.

\begin{theorem}
\label{Thm:Period}
 If $m$ is even, then the equations $Y_{-2}=Y_{(m-2)/2}$ and $Y_{-1}=Y_{m/2}$ hold. If $m$ is odd, then the equations $Y_{-(m+1)/2}=Y_{(m+3)/2}$ and $Y_{-(m-1)/2}=Y_{(m+5)/2}$ hold.
\end{theorem}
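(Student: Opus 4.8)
The plan is to reduce every equality in the theorem to an identity among the integers $g_i$, using Proposition \ref{ExProp} to write the relevant $Y$'s as Laurent monomials in the two quantities $Y_2$ and $x_3$, and then to compare exponents directly.

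First I would record the symmetries of $(g_i)$ that follow from the closed formula (\ref{SeqG}) in Proposition \ref{PropG}. Since $\omega^m=1$ we have $g_{i+m}=g_i$, and since replacing $\omega$ by $\omega^{-1}=\overline{\omega}$ only flips the sign of the numerator we get the oddness $g_{-i}=-g_i$; together these give the reflection $g_{m-i}=-g_i$. When $m$ is even the extra relation $\omega^{m/2}=-1$ yields the antiperiodicity $g_{i+m/2}=-g_i$. In particular $g_{(m+1)/2}=-g_{(m-1)/2}$ and $g_{(m+3)/2}=-g_{(m-3)/2}$, and these arithmetic facts do essentially all of the work.

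For $m$ even the proof is immediate: all four indices $-2,-1,(m-2)/2,m/2$ lie in the range where Proposition \ref{ExProp} applies, so I would simply express each $Y$ as a monomial in $Y_2$ and $x_3$ and check that the exponents agree. Using $g_{m/2-k}=-g_{-k}=g_k$ (antiperiodicity followed by oddness) one finds $g_{(m-4)/2}=g_2$, $g_{(m-2)/2}=g_1$ and $g_{(m-6)/2}=g_3$, after which the monomials attached to $Y_{-2}$ and $Y_{(m-2)/2}$, and to $Y_{-1}$ and $Y_{m/2}$, coincide term by term.

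For $m$ odd the indices $(m+3)/2$, $(m+5)/2$ and $-(m+1)/2$ fall outside the range of Proposition \ref{ExProp} and are instead governed by the four boundary formulas; handling them is the crux. For the equality $Y_{-(m-1)/2}=Y_{(m+5)/2}$ I would first note that $Y_{-(m-1)/2}$ is an interior term and, because $g_{(m-1)/2}+g_{(m+1)/2}=0$, collapses to the pure power $x_3^{g_{(m-1)/2}}$. Then I would compute $Y_{(m+5)/2}$ from its defining formula, the key step being that the product $Y_{(m+3)/2}Y_{(m+1)/2}$ equals a single monomial plus $1$, so that the $-1$ in the denominator cancels and the quotient again reduces to $x_3^{g_{(m-1)/2}}$. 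For the equality $Y_{-(m+1)/2}=Y_{(m+3)/2}$ both sides are given by a two-term ``sum'' formula; expanding each as a sum of two monomials and using $g_{(m+1)/2}=-g_{(m-1)/2}$ together with $g_{(m+3)/2}=-g_{(m-3)/2}$, I would verify that the two monomials on one side equal the two monomials on the other, but listed in the opposite order.

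The main obstacle is exactly this odd boundary computation. Unlike the interior terms, the boundary $Y$'s are genuine binomials rather than monomials, so one must confirm that the deliberately chosen boundary formulas produce both the cancellation of the constant $1$ and the cross-matching of the two monomial summands. Once those two verifications are in place, everything reduces to routine bookkeeping with the three symmetry relations for $(g_i)$, and no multiplicative independence of $Y_2$ and $x_3$ is needed, since in each case the relevant exponents turn out to be literally equal.
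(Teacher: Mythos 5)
Your proposal is correct and takes essentially the same route as the paper's proof: both reduce every equality to exponent identities for $(g_i)$ by writing the interior terms as monomials in $Y_2$ (or $Y_0$) and $x_3$ via Proposition \ref{ExProp}, and both handle the odd-$m$ boundary terms $Y_{-(m+1)/2}$, $Y_{(m+3)/2}$, $Y_{(m+5)/2}$ by direct computation from their defining formulas, with exactly the cancellation of the $-1$ and the cross-matching of the two monomial summands that you describe. The only cosmetic difference is that you organize the needed $g$-identities through the symmetries $g_{-i}=-g_i$, $g_{m-i}=-g_i$, $g_{i+m/2}=-g_i$, whereas the paper extracts the same values (e.g. $g_{(m+1)/2}=-g$, $g_{(m-3)/2}=(a-1)g$, $g_{(m-5)/2}=(a^2-3a+1)g$) from Proposition \ref{PropG} and the recursion.
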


\begin{proof} Let $m$ be even. It is easy to see that $g_{\frac{m}{2}}=0$, $g_{\frac{m}{2}-1}=1$ and $g_{\frac{m}{2}-2}=a-2$, so that $Y_{\frac{m}{2}}=Y_2^{a-1}x_3^{2-a}$, which agrees with $Y_{-1}=Y_0^{a-1}x_3=(Y_2x_3^{-1})^{a-1}x_3=Y_2^{a-1}x_3^{2-a}$. Moreover, it follows that $Y_{\frac{m}{2}-1}=Y_{\frac{m}{2}}^{a-2}Y_2^{-1}x_3=Y_{-1}^{a-2}Y_2^{-1}x_3$, which agrees with $Y_{-2}=Y_{-1}^{a-2}Y_0^{-1}$.

Now let $m$ be odd. Let us put $g=g_{(m-1)/2}$. Due to Proposition \ref{PropG} we have $g_{(m+1)/2}=-g$, from which we conclude $g_{(m-3)/2}=(a-1)g$ and $g_{(m+3)/2}=-(a-1)g$. Furthermore, the recursion implies $g_{(m-5)/2}=(a^2-3a+1)g$. Proposition \ref{ExProp} yields
\begin{align*}
&Y_{-(m-3)/2}=Y_0^{(a-1)g+g}x_3^{(a-1)g}=Y_0^{ag}x_3^{(a-1)g},\\
&Y_{-(m-1)/2}=Y_0^{g-g}x_3^g=x_3^g.
\end{align*}
Using these expressions, we can write the next element of the sequence as
\begin{align*}
Y_{-(m+1)/2}=\frac{x_3^{(a-1)g}+Y_0^{ag}x_3^{(a-1)g}}{Y_0^{ag}x_3^{ag}}=x_3^{-g}(1+Y_0^{-ag}).
\end{align*}
On the other hand, Proposition \ref{ExProp} yields
\begin{align*}
&Y_{(m-1)/2}=Y_0^{(a^2-3a+1)g+(a-1)g}x_3^{(a-1)g}=Y_0^{a(a-2)g}x_3^{(a-1)g},\\
&Y_{(m+1)/2}=Y_0^{(a-1)g+g}x_3^{g}=Y_0^{ag}x_3^g.
\end{align*}
Using these expressions, we can write the next elements of the sequence as
\begin{align}
\label{FormulaY}
&Y_{(m+3)/2}=\frac{Y_0^{a(a-1)g}x_3^{(a-1)g}+Y_0^{a(a-2)g}x_3^{(a-1)g}}{Y_0^{a(a-1)g}x_3^{ag}}=x_3^{-g}(1+Y_0^{-ag}),\\
&Y_{(m+5)/2}=\frac{Y_0^{ag}x_3^{g}}{Y_0^{ag}(1+Y_0^{-ag})-1}=x_3^{g}.\nonumber
\end{align}
The expressions agree with the expressions that we obtain for $Y_{-(m+1)/2}$ and $Y_{-(m-1)/2}$, and so the statement follows.
\end{proof}

\subsection{Numerical comparison of the two sequences}

\begin{theorem} Let $i\in I$. The element $Y_i=Y_i(x_1,x_2)$ is an approximation of $y_i=y_i(x_1,x_2)$ with relative error $\left| \frac{Y_i-y_i}{y_i}\right|=\left| \frac{Y_i(x_1,x_2)-y_i(x_1,x_2)}{y_i(x_1,x_2)}\right|=O(x_1x_2)$ for $x_1,x_2\to 0$. 
\end{theorem}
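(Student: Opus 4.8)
The plan is to compare the two sequences term by term in the limit $x_1, x_2 \to 0$, establishing that the recursions for $(y_i)$ and $(Y_i)$ differ only by lower-order corrections. The key observation is that the exact recursion from the first Proposition reads $y_{i-1}y_iy_{i+1} = y_{i-1} + y_{i+1} + y_i^{a-1}$, whereas the defining recursion for $(Y_i)$ is the ``dominant part'' $Y_{i-1}Y_iY_{i+1} = Y_i^{a-1}$ (equivalently $Y_{i+1} = Y_i^{a-1}/(Y_{i-1}Y_i)$), obtained by dropping the two linear terms $y_{i-1}$ and $y_{i+1}$. So morally $(Y_i)$ is the tropical-like leading term and I want to show that the dropped terms are negligible compared to $Y_i^{a-1}$ as $x_1,x_2\to 0$.

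First I would establish the growth rates of the $Y_i$ in terms of $x_1, x_2$. Using the explicit formulas from Proposition \ref{ExProp}, namely $Y_{i+2} = Y_0^{g_i+g_{i+1}} x_3^{g_{i+1}}$ and $Y_{-i} = Y_0^{g_i+g_{i+1}} x_3^{g_i}$, together with the leading asymptotics $Y_1 = x_2 \to 0$, $x_3 = Y_1Y_2 - 1 \to -1$ hence $|x_3|\to 1$, and $Y_0 = Y_2/(Y_1Y_2-1)$, I would extract how each $Y_i$ scales as a monomial in $x_1, x_2$ to leading order. Since the exponents $g_i + g_{i+1}$ and $g_i$ are controlled by Proposition \ref{PropG} and are (in the relevant range $i \le \frac{m-1}{2}$) bounded integers with $|g_i|\ge 1$ away from the endpoints, I can read off which $Y_i$ blow up and which stay bounded as $x_1,x_2\to 0$. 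This bookkeeping shows that at each step the ratio $(Y_{i-1}+Y_{i+1})/Y_i^{a-1}$ that measures the discarded terms is itself $O(x_1 x_2)$.

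The core argument is then an induction along the index set $I$, anchored at $i=1,2$ where $Y_i = y_i$ exactly, so the relative error is $0$ (as the table confirms). Writing $y_i = Y_i(1 + \varepsilon_i)$, I would substitute into the exact recursion $y_{i-1}y_iy_{i+1} = y_{i-1}+y_{i+1}+y_i^{a-1}$, divide through by $Y_{i-1}Y_iY_{i+1} = Y_i^{a-1}$, and solve for $\varepsilon_{i+1}$ in terms of $\varepsilon_{i-1}, \varepsilon_i$ and the small ratios computed above. Each step introduces an additive error of size $O(x_1 x_2)$ coming from the linear terms, and because the recursion involves only finitely many steps (the index set $I$ has size $O(m)$, a constant independent of $x_1,x_2$), the accumulated relative error remains $O(x_1 x_2)$ uniformly over $i \in I$. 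I would handle both the ``forward'' branch ($Y_3, Y_4, \ldots$) and the ``backward'' branch ($Y_0, Y_{-1}, \ldots$) symmetrically, and treat the two anomalous steps for odd $m$ (the definitions of $Y_{(m+3)/2}$ and $Y_{(m+5)/2}$, which retain an extra linear term) as separate base-type cases where the same $O(x_1x_2)$ estimate is checked directly.

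The main obstacle I anticipate is the growth-rate bookkeeping: several of the $Y_i$ tend to $\infty$ as $x_1,x_2\to 0$, so ``small relative error'' must be proved even though the absolute errors may be large. The delicate point is confirming that the discarded additive terms $y_{i-1}+y_{i+1}$ really are a factor $O(x_1x_2)$ smaller than the dominant term $Y_i^{a-1}$ at \emph{every} index, including those where $g_i$ is small (near the endpoints of $I$) so the corresponding $Y_i$ does not blow up. I expect this requires carefully using $a > 2$ (guaranteed by $m > 4$) so that the exponent $a-1 > 1$ genuinely dominates, and tracking the signs and magnitudes of the $g_i$ via the closed form in Proposition \ref{PropG} rather than estimating term by term.
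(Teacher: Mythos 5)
Your overall plan coincides with the paper's actual proof: use the explicit formulas of Proposition \ref{ExProp} together with Proposition \ref{PropG} to control the sizes of the $Y_i$, show that the terms discarded when passing from the exact recursion $y_{i-1}y_iy_{i+1}=y_{i-1}+y_{i+1}+y_i^{a-1}$ to the defining recursion of $(Y_i)$ are relatively $O(x_1x_2)$ (this is the paper's auxiliary lemma), and then propagate multiplicative errors $1+\delta_i$ through the finitely many indices of $I$ by induction, with a Taylor expansion of $(1+x)^{a-1}$ to handle the exponent. However, there is a concrete error in the growth-rate bookkeeping, which you yourself identify as the crux. You claim $x_3=Y_1Y_2-1\to -1$, hence $\vert x_3\vert\to 1$. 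This is false: $Y_2=y_2=x_4=\frac{1+x_1+x_2^a}{x_1x_2}\to\infty$, so $Y_1Y_2=\frac{1+x_1+x_2^a}{x_1}\to\infty$ and $x_3=\frac{1+x_2^a}{x_1}>x_1^{-1}\to+\infty$. (It could not tend to $-1$ in any case, since all terms of the sequence are positive.) The paper's estimates rest precisely on the fact that the three quantities $x_3>x_1^{-1}$, $Y_2>x_1^{-1}x_2^{-1}$, and $Y_2/x_3>x_2^{-1}$ all blow up: each error term, e.g. $(Y_iY_{i-1})^{-1}=(Y_2/x_3)^{-g_{i-2}-g_{i-3}}Y_2^{-g_{i-1}-g_{i-2}}$, is a product of negative powers of these large quantities, and the bound $O(x_1x_2)$ comes from $Y_2^{-1}<x_1x_2$ together with nonnegativity of the exponent sums $g_j+g_{j+1}$ in the relevant range.

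With your asymptotics the read-off of which $Y_i$ blow up would be qualitatively wrong -- for instance $Y_{-(m-1)/2}=x_3^{\,g}$ blows up like $x_1^{-g}$, whereas under $\vert x_3\vert\to 1$ it would stay bounded -- and the discarded terms would no longer be provably small relative to the retained ones, so your induction could not close. Two smaller inaccuracies: first, the exponents $g_i$ are not integers (already $g_2=a-2=2\cos(\frac{2\pi}{m})$ is irrational for $m=5$), so "bounded integers with $\vert g_i\vert\ge 1$" is not the right statement; what is actually needed, and what the closed form $g_i=\frac{\omega^i-\overline{\omega}^i}{\omega-\overline{\omega}}=\sin(\frac{2\pi i}{m})/\sin(\frac{2\pi}{m})$ delivers, are sign and lower-bound statements for the exponent sums. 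Second, the initial steps are not instances of the pure dominant-part recursion $Y_{i+1}=Y_i^{a-1}/(Y_{i-1}Y_i)$: the definitions of $Y_0$, $Y_{-1}$, $Y_3$ retain the $-1$ in the denominator, and for $Y_0$ the dropped term is $y_1^{a-1}$ rather than a linear term; these require the separate direct estimates the paper carries out for $i\in\{0,3\}$, not only the odd-$m$ endpoint cases you single out.
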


Before we prove the theorem, we state a lemma. For proofs of the lemma and the theorem recall that our assumption $m>4$ implies $a>2$. Moreover, note that for $x_1,x_2\in (0,1)$ we have
\begin{align*}
&x_3=\tfrac{1+x_2^a}{x_1}>x_1^{-1}>1,&&Y_2=\tfrac{1+x_1+x_2^a}{x_1x_2}>x_1^{-1}x_2^{-1}>1,\\
&Y_2/x_3=\tfrac{1+x_1+x_2^a}{x_2(1+x_2^a)}>x_2^{-1}>1. 
\end{align*}

\begin{lemma} Suppose that $i\in I$ indexes some element of the sequence $(Y_i)_{i\in I}$. \begin{itemize}
\item[(a)] Suppose that $m$ is even and $i\in\{0\}\cup\{3,4,\ldots,\tfrac{m-2}{2}\}$ or $m$ is odd and $i\in\{0,-1,\ldots,-\tfrac{m-3}{2}\}\cup\{3,4,\ldots,\tfrac{m+3}{2}\}$. Then we may write the quotient $(Y_iY_{i-1}-1)/(Y_iY_{i-1})$ as $1-\epsilon_i$ for some the real-valued function $\epsilon_i=\epsilon_i(x_1,x_2)$ with $\vert\epsilon_i(x_1,x_2)\vert=O(x_1x_2)$. 
\item[(b)] If $3\leq i\leq\tfrac{m-1}{3}$, then we may write the quotient $(Y_i^{a-1}+Y_{i-1})/(Y_i^{a-1})$ as $1+\epsilon'_i$ for some real-valued function $\epsilon'_i=\epsilon'_i(x_1,x_2)$ with $\vert\epsilon'_i(x_1,x_2)\vert=O(x_1x_2)$. If $-\tfrac{m-3}{2}\leq i\leq0$, then we may write the quotient $(Y_i^{a-1}+Y_{i+1})/(Y_i^{a-1})$ as $1+\epsilon''_i$ for some real-valued function $\epsilon''_i=\epsilon''_i(x_1,x_2)$ with $\vert\epsilon''_i(x_1,x_2)\vert=O(x_1x_2)$. Finally, if $m$ is odd, then we may write the quotient $(Y_{(m+3)/2}^{a-1}+Y_{(m+1)/2})/Y_{(m+1)/2}$ as $1+\epsilon'''$ for some real-valued function $\epsilon'''=\epsilon'''(x_1,x_2)$ with $\vert\epsilon'''(x_1,x_2)\vert=O(x_1x_2)$. 
\end{itemize}
\end{lemma}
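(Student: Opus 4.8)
The plan is to establish each bound by obtaining lower estimates on the relevant $Y_i$ in terms of the initial variables, since the error terms are always ratios in which a ``large'' quantity appears in the denominator. The key observation is that the explicit formula from Proposition \ref{ExProp}, namely $Y_{i+2}=Y_0^{g_i+g_{i+1}}x_3^{g_{i+1}}$ and $Y_{-i}=Y_0^{g_i+g_{i+1}}x_3^{g_i}$, expresses every element as a monomial in $Y_0$ and $x_3$ with exponents governed by the integer sequence $(g_i)$. The remark following Proposition \ref{PropG} records that $g_i\geq 1$ for $1\leq i\leq\tfrac{m-1}{2}$ and $g_i\leq -1$ for $-\tfrac{m-1}{2}\leq i\leq -1$, with the stated exceptions, so in the index ranges appearing in the lemma the relevant exponents are bounded below by a positive integer. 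Combined with the three inequalities displayed before the lemma (that $x_3$, $Y_2$, and $Y_2/x_3$ all exceed $x_2^{-1}$ or $(x_1x_2)^{-1}$ for $x_1,x_2\in(0,1)$), this should force each $Y_i$ to be at least of order $(x_1x_2)^{-c}$ for some $c>0$, which is exactly what drives the error to zero.

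For part (a), I would write $(Y_iY_{i-1}-1)/(Y_iY_{i-1})=1-1/(Y_iY_{i-1})$, so that $\epsilon_i=1/(Y_iY_{i-1})$. It then suffices to show $Y_iY_{i-1}\geq C(x_1x_2)^{-1}$ for a positive constant, i.e. that the product of two consecutive terms grows at least like $(x_1x_2)^{-1}$. Using the monomial formula, $Y_iY_{i-1}$ is a monomial $Y_0^{p}x_3^{q}$ whose exponents are sums of consecutive $g$-values; since consecutive indices in the allowed range cannot both hit the exceptional values where $|g_i|<1$, the exponents should be large enough that substituting the lower bounds $Y_0=Y_2/x_3>x_2^{-1}$ and $x_3>x_1^{-1}$ yields the claim. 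For the base cases ($i=0$ and $i=3$, and their odd-$m$ analogues) I would simply compute $Y_0Y_{-1}$ and $Y_3Y_2$ directly from the defining recursions and check the bound by hand.

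For part (b), the terms $(Y_i^{a-1}+Y_{i-1})/Y_i^{a-1}=1+Y_{i-1}/Y_i^{a-1}$ give $\epsilon_i'=Y_{i-1}/Y_i^{a-1}$, and similarly for $\epsilon_i''$ and $\epsilon'''$. Here the point is that the subdominant term $Y_{i-1}$ (respectively $Y_{i+1}$) is negligible compared with $Y_i^{a-1}$, so I would again pass to the monomial description and check that the exponent of $Y_0$ (equivalently of $x_3$) in $Y_i^{a-1}/Y_{i-1}$ is at least $1$; the recursion $g_{i+1}=(a-2)g_i-g_{i-1}$ ensures that multiplying $g_i$ by $a-1$ and subtracting the adjacent index increases the relevant exponent, using $a>2$. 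The three exceptional sub-cases at the ``turning points'' for odd $m$ (the terms $Y_{(m+3)/2}$ and $Y_{(m+5)/2}$, which are defined by non-monomial formulas) must be handled separately, substituting the closed forms for $Y_{(m\pm 1)/2}$ in terms of $g=g_{(m-1)/2}$ that were computed in the proof of Theorem \ref{Thm:Period}.

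The main obstacle I anticipate is the bookkeeping at the exceptional indices, where $|g_i|<1$ makes the naive exponent bound fail and where the defining formulas are additive rather than monomial; these are precisely the ``boundary'' terms of the index set $I$, and each needs an individual verification that the competing term is still of lower order. Away from these boundary cases the argument is a uniform application of the monomial formula together with the positivity of the exponents, so the real content lies in confirming that no single step in the allowed range lands on two consecutive exceptional values simultaneously.
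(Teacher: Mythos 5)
Your proposal is essentially the paper's own proof: the same decomposition $\epsilon_i=(Y_iY_{i-1})^{-1}$ and $\epsilon_i'=Y_{i-1}/Y_i^{a-1}$, the same reduction via the monomial formula of Proposition \ref{ExProp} to lower bounds on sums of consecutive $g$-values, and the same elementary inequalities $x_3>x_1^{-1}$, $Y_2>(x_1x_2)^{-1}$, $Y_0=Y_2/x_3>x_2^{-1}$; your basis $(Y_0,x_3)$ versus the paper's $(Y_2/x_3,\,Y_2)$ is an immaterial change of coordinates, and the paper likewise defers the non-monomial turning-point terms to a separate computation using formula (\ref{FormulaY}).

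One concrete correction, though: your closing claim, that the ``real content'' is confirming that no step in the allowed range hits two consecutive exceptional indices, is false for odd $m$. The indices $\tfrac{m-1}{2}$ and $\tfrac{m+1}{2}$ are consecutive and both exceptional ($g_{(m-1)/2}=g=\tfrac{1}{2\cos(\pi/m)}<1$ and $g_{(m+1)/2}=-g$), and this pair genuinely enters the estimates at the boundary of the negative branch (the step involving $Y_{-(m-3)/2}Y_{-(m-1)/2}$): there the exponent sum $g_{(m-1)/2}+g_{(m+1)/2}$ is exactly $0$, not positive. The paper accommodates this by demanding only \emph{nonnegativity} of the exponent attached to the base $Y_2/x_3>1$, reserving the ``at least $1$'' requirement for the exponent of $Y_2$; equivalently, in your coordinates one must compute the weighted sums explicitly, e.g. $g_{(m-3)/2}+2g_{(m-1)/2}+g_{(m+1)/2}=(a-1)g+2g-g=ag>1$, using $g_{(m-3)/2}=(a-1)g$ as in the proof of Theorem \ref{Thm:Period}. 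So your plan succeeds, but only after replacing that heuristic check with the explicit evaluation of the $g$-sums at the boundary; relying on the heuristic as stated would make the verification appear to fail precisely where the paper's weaker nonnegativity condition is needed.
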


\begin{proof}[Proof of the lemma]
(a) By definition we have $\epsilon_i(x_1,x_2)=(Y_iY_{i-1})^{-1}$. If $3\leq i\leq \tfrac{m+1}{2}$, then Proposition \ref{ExProp} implies 
\begin{align*}
\epsilon_i(x_1,x_2)=(Y_iY_{i-1})^{-1}=(Y_2/x_3)^{-g_{i-2}-g_{i-3}}Y_2^{-g_{i-1}-g_{i-2}}.
\end{align*} 
Thus, it is enough to show that $g_{i-2}+g_{i-3}$ is nonnegative and $g_{i-1}+g_{i-2}$ is at least $1$ for $3\leq i\leq \tfrac{m+1}{2}$, which follows from the explicit formula in Proposition \ref{PropG}. If $m$ is odd and $i=\tfrac{m+3}{2}$, then Proposition \ref{ExProp} and formula (\ref{FormulaY}) imply $Y_iY_{i-1}=1+(Y_2/x_3)^{ag}>(Y_2/x_3)^{ag}$ from which we conclude with the statement of the lemma by similar arguments as above. If $-\tfrac{m-3}{2}\leq i\leq 0$, then Proposition \ref{ExProp} implies 
\begin{align*}
\epsilon_i(x_1,x_2)=(Y_iY_{i-1})^{-1}=(Y_2/x_3)^{-g_{-i+1}-g_{-i+2}}Y_2^{-g_{-i}-g_{-i+1}}.
\end{align*}
Thus, it is enough to show that $g_{i+1}+g_{i+2}$ is nonnegative and $g_{i}+g_{i+1}$ is at least $1$ for $0\leq i\leq\tfrac{m-3}{2}$, which follows from the explicit formula in Proposition \ref{PropG}. 

(b) By definition we have $\epsilon_i'(x_1,x_2)=Y_{i-1}/Y_i^{a-1}$ for all $i$ for which the function is defined. Proposition \ref{ExProp} yields $\epsilon'_i(x_1,x_2)=(Y_2/x_3)^{g_{i-3}-(a-1)g_{i-2}}Y_2^{g_{i-2}-(a-1)g_{i-1}}=(Y_2/x_3)^{-g_{i-1}-g_{i-2}}Y_2^{-g_{i}-g_{i-1}}$. Thus, it is enough to show that $g_{i-1}+g_{i-2}$ and $g_{i}+g_{i-1}$ is greater than $1$ for $3\leq i\leq \tfrac{m-1}{2}$, which follows from the explicit formula in Proposition \ref{PropG}. The cases $\epsilon''_i$ and $\epsilon'''$ are proved similarly.
\end{proof}

With the preparation we are ready to prove the theorem:

\begin{proof}[Proof of the theorem]
The statement is true for $i\in\{1,2\}$ because $Y_1=y_1$ and $Y_2=y_2$ by definition. For $i\in \{3,0\}$ we can estimate the relative errors as $x_1,x_2\to 0$:
\begin{align*}
&\left|\frac{y_3-Y_3}{y_3}\right|=\left|\frac{y_1}{y_1+y_2^{a-1}}\right|=\left|\frac{1}{1+y_1^{-1}y_2^{a-1}}\right|<\left|\frac{1}{1+y_1^{-1}y_2}\right|<\left|\frac{1}{1+x_1^{-1}x_2^{-2}}\right|<x_1x_2^2=O(x_1x_2),\\
&\left|\frac{y_0-Y_0}{y_0}\right|=\left|\frac{y_1^{a-1}}{y_1^{a-1}+y_2}\right|=\left|\frac{1}{1+y_1^{1-a}y_2}\right|<\left|\frac{1}{1+y_1^{-1}y_2}\right|<\left|\frac{1}{1+x_1^{-1}x_2^{-2}}\right|<x_1x_2^2=O(x_1x_2).
\end{align*}

For the other values of $i$ we prove the theorem by induction on the absolute value of $i$. Assume that $i\geq 3$ and that the statement is true for $i$ and $i-1$. We put $Y_i/y_i=1+\delta_i$ and $Y_{i-1}/y_{i-1}=1+\delta_{i-1}$. By induction hypothesis $\vert\delta_i\vert=\vert\delta_i(x_1,x_2)\vert$ and $\vert\delta_{i-1}\vert=\vert\delta_{i-1}(x_1,x_2)\vert$ are real-valued functions in the class $O(x_1x_2)$. Define an auxiliary function
\begin{align*}
\widetilde{Y}_{i+1}=Y_{i+1}(x_1,x_2)=\frac{Y_i^{a-1}+Y_{i-1}}{Y_iY_{i-1}-1}.
\end{align*}
Taylor's theorem for the function $f(x)=(1+x)^{a-1}$ evaluated at $x=\delta_i$ implies that we may write $(1+\delta_i)^{a-1}=1+\delta_i(a-1)(1+\xi)^{a-2}$ for some $\xi$ between $0$ and $\delta_i$. We put $Y_i^{a-1}/y_i^{a-1}=1+\widetilde{\delta}_i$. The previous discussion implies that the function $\vert\widetilde{\delta}_i\vert=\vert\widetilde{\delta}_i(x_1,x_2)\vert$ is a real-valued function in the class $O(x_1x_2)$. The quotient $(Y_i^{a-1}+Y_{i-1})/(y_i^{a-1}+y_{i-1})$ lies between $1+\widetilde{\delta}_i$ and $1+\delta_{i-1}$. Hence we may write the quotient as $1+\delta_i'$ for some the real-valued function $\delta_i'=\delta_i'(x_1,x_2)$ with $\vert\delta_i'(x_1,x_2)\vert=O(x_1x_2)$. Similarly, by the induction hypothesis and the previous lemma we may write the quotient $\tfrac{y_iy_{i-1}-1}{Y_iY_{i-1}-1}=\tfrac{y_iy_{i-1}-1}{Y_iY_{i-1}}\cdot\tfrac{Y_iY_{i-1}}{Y_iY_{i-1}-1}$ as $1+\delta_i''$ for some the real-valued function $\delta_i''=\delta_i''(x_1,x_2)$ with $\vert\delta_i''(x_1,x_2)\vert=O(x_1x_2)$. We conclude that the quotient $\widetilde{Y}_{i+1}/y_{i+1}=1+\delta_i'''$ for some real-valued function $\delta_i'''=\delta_i'''(x_1,x_2)$ with $\vert\delta_i'''(x_1,x_2)\vert=O(x_1x_2)$. 

From the previous lemma we can conclude that we may write the quotient $Y_{i+1}/\widetilde{Y}_{i+1}$ as $1+\widetilde{\epsilon}_i$ for some real-valued function $\widetilde{\epsilon}_i=\widetilde{\epsilon}_i(x_1,x_2)$ with $\vert \widetilde{\epsilon}_i(x_1,x_2)\vert=O(x_1x_2)$. The theorem follows because $Y_i/y_i=(Y_i/\widetilde{Y}_i)\cdot(\widetilde{Y}_i/y_i)=(1+\widetilde{\epsilon}_i)(1+\delta_i''')$ with $\widetilde{\epsilon}_i+\delta_i'''+\widetilde{\epsilon}_i\delta_i'''=O(x_1x_2)$.

The case $i\leq 0$ is proved similarly.
\end{proof}

Thus, the sequence $(y_i)_{i\in I}$ is approximately equal to the sequence $(Y_i)_{i\in I}$ which can be extended to a period sequence $(Y_i)_{i\in\mathbb{Z}}$ in view of Theorem \ref{Thm:Period}. Hence, the original sequence $(x_i)_{i\in\mathbb{Z}}$ is approximately equal to a periodic sequence.

\section{Matrix mutation with real entries}

The matrix mutation rule generalizes to real entries. As before, an $n\times n$ matrix $B=(b_{ij})$ with \textit{real} entries is called \textit{skew-symmetrizable} if there exists a diagonal $n\times n$ matrix $D=\textrm{diag}(d_1,d_2,\ldots,d_n)$ with positive real diagonal entries such that the matrix $DB$ is skew-symmetric, i.\,e. $d_ib_{ij}=-d_jb_{ji}$ for all $1\leq i,j\leq n$. Let $B$ be a real skew-symmetrizable $n\times n$ matrix and $k\in\{1,2,\ldots,n\}$ an index. We define the \textit{mutation} of $B$ at $k$ to be the $n\times n$ matrix $B'=(b_{ij}')$ with entries
\begin{align*}
b'_{ij}=\begin{cases}
-{b_{ij}},&\textrm{if } k\in\{i,j\};\\
{b_{ij}}+\frac{\vert b_{ik}\vert b_{kj}+b_{ik}\vert b_{kj}\vert}2,&\textrm{otherwise}.
\end{cases}
\end{align*} 
As before, we denote the mutation also by $B'=\mu_k(B)$. The following proposition is immediate.

\begin{prop} Let $B$ be a real skew-symmetrizable $n\times n$ matrix with skew-symmetrizer $D$ and let $k\in\{1,2,\ldots,n\}$. The matrix $\mu_k(B)$ is again skew-symmetrizable with skew-symmetrizer $D$. Moreover, $\mu_k^2(B)=B$.
\end{prop}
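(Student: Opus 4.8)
The plan is to verify both claims by direct computation, treating the two assertions separately since they are logically independent. The statement is labeled "immediate" by the author, and indeed the only genuine content is that the real-valued mutation rule preserves skew-symmetrizability with the \emph{same} diagonal matrix $D$; the involution property $\mu_k^2(B)=B$ is a formal algebraic identity. I would organize the proof around these two parts.

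For the skew-symmetrizability claim, I would fix the skew-symmetrizer $D=\mathrm{diag}(d_1,\ldots,d_n)$ of $B$, so that $d_i b_{ij}=-d_j b_{ji}$ for all $i,j$, and then verify $d_i b'_{ij}=-d_j b'_{ji}$ for $B'=\mu_k(B)$ by case analysis according to the definition of the mutation rule. When $k\in\{i,j\}$ the entries merely change sign, so the identity $d_i(-b_{ij})=-d_j(-b_{ji})$ follows at once from the hypothesis. The substantive case is $i,j\neq k$, where $b'_{ij}=b_{ij}+\tfrac12\bigl(|b_{ik}|b_{kj}+b_{ik}|b_{kj}|\bigr)$. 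Here I would compute $d_i b'_{ij}+d_j b'_{ji}$ and show it vanishes: the term $d_i b_{ij}+d_j b_{ji}$ cancels by hypothesis, and for the remaining correction terms I would rewrite $d_i b_{kj}=-d_k b_{jk}\cdot(d_i/d_k)$-type relations using $d_k b_{kj}=-d_j b_{jk}$ to convert every factor involving index $k$ and match the two halves. The key observation making this work is that skew-symmetry of $DB$ forces $\mathrm{sgn}(b_{ik})=-\mathrm{sgn}(b_{ki})$ and $|d_i b_{ik}|=|d_k b_{ki}|$, so the absolute-value factors transform predictably; this is exactly the crystallographic-case argument, and since it never used integrality, it carries over verbatim to real entries.

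For the involution claim, I would apply $\mu_k$ twice and show $\mu_k(\mu_k(B))=B$ entrywise. The sign-flip entries with $k\in\{i,j\}$ obviously return to themselves after two applications. For $i,j\neq k$, writing $b''_{ij}$ for the twice-mutated entry, I would use that the intermediate entries $b'_{ik}=-b_{ik}$ and $b'_{kj}=-b_{kj}$ enter the second correction term; substituting these, the correction term $\tfrac12(|b'_{ik}|b'_{kj}+b'_{ik}|b'_{kj}|)=\tfrac12(|b_{ik}|(-b_{kj})+(-b_{ik})|b_{kj}|)$ is exactly the negative of the first correction, so it cancels and returns $b_{ij}$. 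This is a purely formal manipulation valid over any ordered field.

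I do not anticipate a genuine obstacle, which is presumably why the author states the proposition without proof. The only point demanding a moment's care is confirming that the sign and absolute-value bookkeeping in the skew-symmetrizability case never relied on the entries being integers — but since the defining relations $d_i b_{ij}=-d_j b_{ji}$ and the mutation formula are algebraic identities over $\mathbb{R}$, replacing $\mathbb{Z}$ by $\mathbb{R}$ changes nothing in the argument. Thus the hardest part is merely organizing the case analysis cleanly rather than overcoming any real difficulty.
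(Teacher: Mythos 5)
Your proof is correct and is precisely the direct entrywise verification that the paper leaves implicit by declaring the proposition ``immediate'': both the cancellation $d_ib'_{ij}+d_jb'_{ji}=0$ via the relations $|b_{ki}|=(d_i/d_k)|b_{ik}|$, $\mathrm{sgn}(b_{ki})=-\mathrm{sgn}(b_{ik})$, and the involution computation check out, and neither uses integrality. Nothing further is needed.
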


As before, we define \textit{mutation equivalence} to be the smallest equivalence relation on the set of skew-symmetrizable real $n\times n$ matrices such that $\mu_k(B)\simeq B$ for all $B$ and all $k$.

\begin{rem}
For mutation classes of integer matrices we have several structural results. There is classification of mutation-finite, skew-symmetrizable, integer matrices by Felikson-Shapiro-Tumarkin \cite{FST}. Effective criteria to test whether a given skew-symmetric matrix is mutation-finite are due to Lawson \cite{L} (via minimal mutation-infinite subquivers) and Warkentin \cite{W} (via forks). On the other hand, structural results for classification of mutation-finite, skew-symmetrizable, real matrices seem to be harder, because it is easy to construct finite mutation classes, as the following example shows. 
\end{rem}

\begin{ex}
Let $a,b,c,a',b',c'$ be positive real numbers. Consider the matrix
\begin{align*}
B=\left(\begin{matrix}0&a&-c'\\-a'&0&b\\c&-b'&0\end{matrix}\right).
\end{align*}
Then $B$ is skew symmetrizable if and only if $abc=a'b'c'$. In particular, let $a,b,c\in\mathbb{R}^{+}$ such that $abc=8$. Put $a'=\tfrac{bc}{2}$, $b'=\tfrac{ca}{2}$ and $c'=\tfrac{ab}{2}$. Then $B$ is skew symmetrizable and $\mu_1(B)=\mu_2(B)=\mu_3(B)=-B$. Hence, $B$ is mutation finite.
\end{ex}

The Coxeter group of type $H_3$ is the symmetry group of the regular icosahedron. Besides the Coxeter group of type $H_4$, it is the only noncrystallographic Coxeter group whose rank is greater than $2$. The next lemma shows that the corresponding mutation matrices are mutation finite.

\begin{lemma} Let $a=4\cos^2(\tfrac{\pi}{5})$. The matrices
\begin{align*}
B'=\left(\begin{matrix}0&a&0\\-1&0&1\\0&-1&0\end{matrix}\right),&&B''=\left(\begin{matrix}0&a&0&0\\-1&0&1&0\\0&-1&0&1\\0&0&-1&0\end{matrix}\right)
\end{align*}
of type $H_3$ and $H_4$ are mutation finite. If we identify matrices that are obtained from each other by a simultaneous row and column permutation, then the mutation classes have sizes $16$ and $82$.
\end{lemma}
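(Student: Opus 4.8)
The plan is to verify the claim by direct computation of the two mutation classes, treating it as a finite orbit enumeration under the group generated by the three (resp.\ four) involutions $\mu_1,\mu_2,\mu_3$ (resp.\ $\mu_1,\ldots,\mu_4$). The essential observation is that mutation preserves the skew-symmetrizer $D$ (by the preceding proposition), and for these matrices the symmetrizer is determined by the single irrational weight $a=4\cos^2(\tfrac{\pi}{5})=\tfrac12(3+\sqrt5)$. So every matrix reachable from $B'$ or $B''$ lives in the ring $\mathbb{Z}[a]=\mathbb{Z}[\tfrac12(1+\sqrt5)]$, and in fact I expect only finitely many algebraic numbers in this ring to appear as entries. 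First I would fix the skew-symmetrizer explicitly: for $B'$ one checks $d_1 a = d_2$ and $d_2=d_3$ with, say, $D=\mathrm{diag}(1,a,a)$ up to scaling, and similarly for $B''$. This pins down the off-diagonal pairs $(b_{ij},b_{ji})$ to satisfy $b_{ji}=-(d_i/d_j)b_{ij}$, so each matrix in the class is recorded by its strictly-upper-triangular entries alone.

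Next I would run the orbit computation as a breadth-first search. Starting from $B'$, maintain a list of matrices found so far (up to the equivalence of simultaneous row/column permutation, as the statement specifies), and repeatedly apply each $\mu_k$ to every new matrix, inserting the result into the list if no permutation of it already occurs. The mutation rule only ever produces entries of the form $b_{ij}+\tfrac12(|b_{ik}|b_{kj}+b_{ik}|b_{kj}|)$, which for our starting entries lie in $\mathbb{Z}[a]$; using the minimal polynomial $a^2=3a-1$ one reduces every entry to the normal form $p+qa$ with $p,q\in\mathbb{Z}$, making the equality test for the canonicalisation-under-permutation step exact and decidable. The search terminates precisely when no new orbit representative appears, at which point the size of the list is the claimed count. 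I would carry this out for $B'$ first, confirming termination after exactly $16$ classes, and then for $B''$, confirming $82$ classes.

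The main obstacle is twofold. The conceptual obstacle is proving that the search really terminates rather than merely observing it does: one must argue a priori that only finitely many entries $p+qa$ can occur, e.g.\ by bounding the entries of every matrix in the class. For the $H_3$ case a bound is plausible by hand because the graph is small, but for $H_4$ a clean closed-form bound seems hard, so I expect to rely on the exhaustiveness of the terminating breadth-first search itself as the proof of finiteness — the computation both exhibits the class and certifies there is nothing outside it, since every neighbour of every listed matrix is shown to be already listed. The practical obstacle is the bookkeeping of the permutation-equivalence: to compare two $4\times4$ matrices up to the $4!=24$ simultaneous permutations without error, I would canonicalise each matrix by choosing the lexicographically smallest permutation of its entry-vector, so that two matrices are identified exactly when their canonical forms coincide.

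Because the argument is ultimately a finite verification, I would present it as the outcome of this exhaustive mutation search, displaying enough of the intermediate matrices (or a summary table of class sizes at each stage of the breadth-first layers) to make the count reproducible, and noting that the reduction $a^2=3a-1$ keeps all arithmetic within $\mathbb{Z}[a]$ so that every comparison is exact. The finiteness and the exact counts $16$ and $82$ then follow from the termination of the search, with the equivalence under simultaneous row/column permutation folded into the canonicalisation step.
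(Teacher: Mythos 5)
Your proposal is correct and takes essentially the same route as the paper: the paper's proof is exactly such a finite verification, exhibiting an explicit list of $16$ matrices (up to simultaneous row/column permutation) that contains $B'$ and checking it is closed under mutation, with the $H_4$ case dispatched by ``a similar argument.'' Your breadth-first orbit search with exact arithmetic in $\mathbb{Z}[a]$ (reducing via $a^2=3a-1$) and permutation-canonicalisation is just an algorithmic rendering of that same closure argument.
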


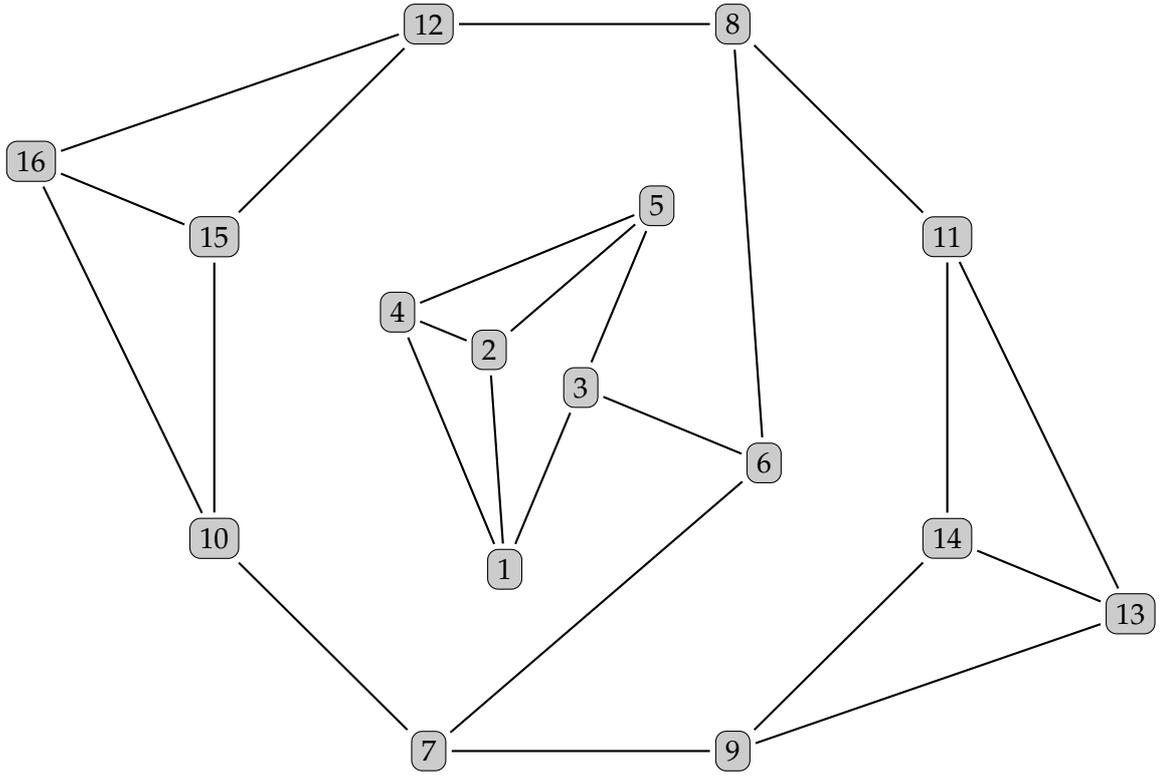
\begin{figure}
\begin{center}
\begin{tikzpicture}
\node[fill=black!20!,rectangle,rounded corners,draw] (7) at (-2,-4.82) {$7$};
\node[fill=black!20!,rectangle,rounded corners,draw] (9) at (2,-4.82) {$9$};
\node[fill=black!20!,rectangle,rounded corners,draw] (14) at (4.82,-2) {$14$};
\node[fill=black!20!,rectangle,rounded corners,draw] (11) at (4.82,2){$11$};
\node[fill=black!20!,rectangle,rounded corners,draw] (8) at (2,4.82) {$8$};
\node[fill=black!20!,rectangle,rounded corners,draw] (12) at (-2,4.82) {$12$};
\node[fill=black!20!,rectangle,rounded corners,draw] (15) at (-4.82,2) {$15$};
\node[fill=black!20!,rectangle,rounded corners,draw] (10) at (-4.82,-2){$10$};
\node[fill=black!20!,rectangle,rounded corners,draw] (16) at (-7.23,3){$16$};
\node[fill=black!20!,rectangle,rounded corners,draw] (13) at (7.23,-3){$13$};
\node[fill=black!20!,rectangle,rounded corners,draw] (6) at (2.41,-1){$6$};
\node[fill=black!20!,rectangle,rounded corners,draw] (2) at (-1.205,0.5){$2$};
\node[fill=black!20!,rectangle,rounded corners,draw] (1) at (-1,-2.41){$1$};
\node[fill=black!20!,rectangle,rounded corners,draw] (5) at (1,2.41){$5$};
\node[fill=black!20!,rectangle,rounded corners,draw] (3) at (0,0){$3$};
\node[fill=black!20!,rectangle,rounded corners,draw] (4) at (-2.41,1){$4$};

\draw[-, thick,shorten >=2pt, shorten <=2pt] (7) to (9);
\draw[-, thick,shorten >=2pt, shorten <=2pt] (9) to (14);
\draw[-, thick,shorten >=2pt, shorten <=2pt] (11) to (14);
\draw[-, thick,shorten >=2pt, shorten <=2pt] (11) to (8);
\draw[-, thick,shorten >=2pt, shorten <=2pt] (12) to (8);
\draw[-, thick,shorten >=2pt, shorten <=2pt] (12) to (15);
\draw[-, thick,shorten >=2pt, shorten <=2pt] (10) to (15);
\draw[-, thick,shorten >=2pt, shorten <=2pt] (10) to (7);
\draw[-, thick,shorten >=2pt, shorten <=2pt] (16) to (12);
\draw[-, thick,shorten >=2pt, shorten <=2pt] (16) to (10);
\draw[-, thick,shorten >=2pt, shorten <=2pt] (16) to (15);
\draw[-, thick,shorten >=2pt, shorten <=2pt] (13) to (9);
\draw[-, thick,shorten >=2pt, shorten <=2pt] (13) to (14);
\draw[-, thick,shorten >=2pt, shorten <=2pt] (13) to (11);
\draw[-, thick,shorten >=2pt, shorten <=2pt] (6) to (8);
\draw[-, thick,shorten >=2pt, shorten <=2pt] (7) to (6);
\draw[-, thick,shorten >=2pt, shorten <=2pt] (3) to (1);
\draw[-, thick,shorten >=2pt, shorten <=2pt] (3) to (5);
\draw[-, thick,shorten >=2pt, shorten <=2pt] (3) to (6);
\draw[-, thick,shorten >=2pt, shorten <=2pt] (2) to (1);
\draw[-, thick,shorten >=2pt, shorten <=2pt] (4) to (1);
\draw[-, thick,shorten >=2pt, shorten <=2pt] (5) to (2);
\draw[-, thick,shorten >=2pt, shorten <=2pt] (4) to (5);
\draw[-, thick,shorten >=2pt, shorten <=2pt] (4) to (2);

\end{tikzpicture}
\end{center}
\label{Figure:Mut}
\caption{The mutation class for $H_3$}
\end{figure}

\begin{proof}
A calculation shows that the following set of $16$ matrices is closed under mutation. Mutations are visualized in the picture. The set contains the matrix $B'$. 
\begin{align*}
&1:\begin{pmatrix}0&1&0\\-1&0&a\\0&-1&0\end{pmatrix},&&7:\begin{pmatrix}0&1-a&a\\a-1&0&1-a\\-1&a-2&0\end{pmatrix},&&12:\begin{pmatrix}0&0&1-a\\0&0&a\\a-2&-1&0\end{pmatrix},\\
&2:\begin{pmatrix}0&-1&0\\1&0&a\\0&-1&0\end{pmatrix},&&8:\begin{pmatrix}0&1-a&a-1\\a-1&0&-a\\2-a&1&0\end{pmatrix},&&13:\begin{pmatrix}0&1-a&0\\a-1&0&a-1\\0&2-a&0\end{pmatrix},\\
&3:\begin{pmatrix}0&-1&a\\1&0&-a\\-1&1&0\end{pmatrix},&&9:\begin{pmatrix}0&a-1&0\\1-a&0&a-1\\0&2-a&0\end{pmatrix},&&14:\begin{pmatrix}0&a-1&0\\1-a&0&1-a\\0&a-2&0\end{pmatrix},\\
&4:\begin{pmatrix}0&1&0\\-1&0&-a\\0&1&0\end{pmatrix},&&10:\begin{pmatrix}0&0&-a\\0&0&a-1\\1&2-a&0\end{pmatrix},&&15:\begin{pmatrix}0&0&a\\0&0&a-1\\-1&2-a&0\end{pmatrix},\\
&5:\begin{pmatrix}0&-1&0\\1&0&-a\\0&1&0\end{pmatrix},&&11:\begin{pmatrix}0&a-1&1-a\\1-a&0&0\\a-2&0&0\end{pmatrix},&&16:\begin{pmatrix}0&0&-a\\0&0&1-a\\1&a-2&0\end{pmatrix}.\\
&6:\begin{pmatrix}0&a-1&-a\\1-a&0&a\\1&-1&0\end{pmatrix},
\end{align*}
A similar argument works in the case $H_4$. 
\end{proof}

\begin{rem}
The following questions which might interesting to investigate in the future: What is a good notion of \textit{cluster algebra} in this context? For example, what is a good choice of an ambient field? Does a sophisticated version of the \textit{Laurent phenomenon} hold? Can we also define approximately periodic sequences for the noncrystallographic cluster algebras of type $H_3$ or $H_4$?
\end{rem}

\end{document}